\def\epsilon{\varepsilon}
\newtheorem{proposition}{Proposition}
\newtheorem{lemma}{Lemma}
\begin{document}
\begin{frontmatter}

\title{Large deviations for random walk in a space--time product
environment\thanksref{TIT1}}
\runtitle{Large deviations for RW in a space--time PE}
\thankstext{TIT1}{Supported in part by a NSF Grant DMS-06-04380.}

\begin{aug}
\author[A]{\fnms{Atilla} \snm{Yilmaz}\ead[label=e1]{yilmaz@cims.nyu.edu}
\ead[label=u1,url]{http://www.math.nyu.edu/\textasciitilde yilmaz/}\corref{}}
\address[A]{Courant Institute\\  251 Mercer Street\\  New York, New York
10012-1185\\  USA\\  \printead{e1}\\  \printead{u1}}
\affiliation{Courant Institute of Mathematical Sciences}
\runauthor{A. Yilmaz}
\end{aug}

\received{\smonth{11} \syear{2007}}

%
\begin{abstract}
We consider random walk $(X_n)_{n\geq0}$ on $\mathbb{Z}^d$ in a
space--time product environment $\omega\in\Omega$. We take the point of
view of the particle and focus on the environment Markov chain
$(T_{n,X_n}\omega)_{n\geq0}$ where $T$ denotes the shift on~$\Omega$.
Conditioned on the particle having asymptotic mean velocity
equal to any given $\xi$, we show that the empirical process of the
environment Markov chain converges to a stationary process
$\mu_\xi^\infty$ under the averaged measure. When $d\geq3$ and $\xi$ is
sufficiently close to the typical velocity, we prove that averaged and
quenched large deviations are equivalent and when conditioned on the
particle having asymptotic mean velocity $\xi$, the empirical process
of the environment Markov chain converges to $\mu_\xi^\infty$ under the
quenched measure as well. In this case, we show that $\mu_\xi^\infty$
is a stationary Markov process whose kernel is obtained from the
original kernel by a Doob $h$-transform.
\end{abstract}

%
\begin{keyword}[class=AMS]
\kwd[Primary ]{60K37}
\kwd[; secondary ]{60F10}.
\end{keyword}

\begin{keyword}
\kwd{Dynamical random environment}
\kwd{rare events}
\kwd{Doob $h$-transform}.
\end{keyword}
\end{frontmatter}

\section{Introduction}

Random walk in a random environment (RWRE) is one of the standard
models in the study of random media. It aims to capture the essence of
the motion of a particle in a disordered medium. Mathematically, it is
a discrete time Markov chain on $\mathbb{Z}^d$ with random transition
probabilities which are sampled from a given joint distribution and
kept fixed throughout the walk. See Sznitman \cite{Sznitman} or
Zeitouni \cite{Zeitouni} for a survey.

Instead, if we assume that the transition probabilities at distinct
states are i.i.d. and are freshly sampled at each time step, we get what
is known as random walk in a space--time product environment. Here is
the precise formulation: For each $n\in\mathbb{Z}$ and $x,y\in\mathbb
{Z}^d$, we write $\pi_{n,n+1}(x,x+y)$ to denote the random probability
of the particle being at position $x+y$ at time $n+1$ given it is at
position $x$ at time $n$. We refer to the random probability vector
$\omega_{n,x}:= (\pi_{n,n+1}(x,x+y) )_{y\in\mathbb{Z}^d}$ as the
environment at position $x$ at time $n$ and assume the environment
$\omega:= (\omega_{n,x} )_{n\in\mathbb{Z},x\in\mathbb{Z}^d}$ to be an
i.i.d. collection. The environments form a probability space $ (\Omega
,\mathcal{B},\mathbb{P} )$ where $\mathcal{B}$ is the Borel $\sigma
$-algebra on $\Omega$ and $\mathbb{P}$ is a product measure. For every
$n\in\mathbb{Z}$, we write $\mathcal{B}_n^+$ and $\mathcal{B}_n^-$ to
mean the $\sigma$-algebras generated by $ (\omega_{m,x}\dvtx x\in\mathbb
{Z}^d, m\geq n )$ and $ (\omega_{m,x}\dvtx x\in\mathbb{Z}^d, m\leq n )$,
respectively.

Given $\omega\in\Omega$, the Markov chain $ (X_n )_{n\geq k}$ starting
at position $x\in\mathbb{Z}^d$ at time $k\in\mathbb{Z}$ induces a
measure $P_{k,x}^{\omega}$ on the space of paths, called the
``quenched'' measure. The semi-direct product $P_{k,x}:=\mathbb{P}\times
P_{k,x}^{\omega}$ is referred to as the ``averaged'' measure.
Expectations with respect to $\mathbb{P}, P_{k,x}^{\omega}$ and
$P_{k,x}$ are denoted by $\mathbb{E}, E_{k,x}^{\omega}$ and $E_{k,x}$,
respectively. For convenience, we sometimes write $Z_{n+1}$ to mean
$X_{n+1}-X_n$.

We note that if $ (X_n )_{n\geq0}$ is random walk on $\mathbb{Z}^d$ in
a space--time product environment, then $ (n,X_n )_{n\geq0}$ can be
viewed as RWRE on $\mathbb{Z}^{d+1}$.

We set $U := \{z\in\mathbb{Z}^d\dvtx |z|=1 \}$. In order to provide short
proofs, we assume that the walk is nearest-neighbor and the environment
is uniformly elliptic, that is, $\mathbb{P}(\pi_{0,1}(0,z)>0)=0$ unless
$z\in U$, and there exists a constant $c>0$ such that $\mathbb{P}(\pi
_{0,1}(0,z)\geq c)=1$ for $z\in U$.

We define the shifts $ (T_{m,y} )_{m\in\mathbb{Z}, y\in\mathbb{Z}^d}$
on $\Omega$ by $ (T_{m,y}\omega)_{n,x}=\omega_{n+m,x+y}$. Given a
random path $ (X_n )_{n\geq0}$, we consider $ (T_{n,X_n}\omega
)_{n\geq0}$ which is a Markov process with state space $\Omega$. It is
referred to as the ``environment Markov chain'' and its transition
kernel is given by $\overline{\pi}(\omega,T_{1,z}\omega):=\pi
_{0,1}(0,z)$ for every $z\in U$. What it does is simply observe the
environment from the point of view of the particle. This is a standard
approach in the study of random media. See, for example, De Masi et al.
\cite{DeMasi}, Kipnis and Varadhan \cite{KV}, Kozlov \cite{Kozlov},
Olla \cite{Olla} or Papanicolaou and Varadhan \cite{PV}.

By a generalization of a technique first given in Kozlov \cite{Kozlov},
Rassoul-Agha \cite{Firas} shows that the environment Markov chain has
a unique invariant measure that is absolutely continuous relative to
$\mathbb{P}$ on every $\mathcal{B}_n^+$.

In Section \ref{Asection}, we focus on the averaged measure. The
marginal of $P_{o,o}$ on paths is classical random walk with transition
vector $ (q(z) )_{z\in U}$ given by $q(z)=\mathbb{E}[\pi_{0,1}(0,z)]$
for every $z\in U$. Therefore, the law of large numbers (LLN) for the
mean velocity of the particle under $P_{o,o}$ is valid and the limiting
velocity is $\xi_o:=\sum_{z\in U}q(z)z$. The averaged large deviation
principle (LDP) for the mean velocity of the particle is simply Cram\'
{e}r's theorem (see, e.g., Dembo and Zeitouni \cite{DZ}) and the rate
function $I_a$ is the convex conjugate of the logarithm of the moment
generating function $\phi$, given by
%
\begin{equation}
\phi(\theta)=\sum_{z\in U}q(z)e^{\langle\theta,z\rangle}.\label{fi}
\end{equation}
We set $\mathcal{D}:= \{\xi\in\mathbb{R}^d \dvtx I_a(\xi)<\infty\}$.
Given $\xi\in\mathrm{int} (\mathcal{D} )$, we consider the event
defined by the particle having asymptotic mean velocity $\xi$. If $\xi
\neq\xi_o$, this is a rare event and the exponential rate of decay of
its $P_{o,o}$-probability is given by $I_a(\xi)>0$. Conditioned on this
event, we expect the environment Markov chain to behave differently.
Indeed, we show that the empirical process of the environment Markov
chain under this conditioning converges to a stationary process
uniquely determined by $\xi$. Here is how we accomplish this: We first
give a definition.
\begin{definition}\label{definemu}
For every $\xi\in\mathrm{int} (\mathcal{D} )$, we define a measure
$\overline{\mu}_\xi^\infty$ on $\Omega\times U^\mathbb{N}$ in the
following way: There exists a unique $\theta\in\mathbb{R}^d$ satisfying
$\xi=\nabla\log\phi(\theta)$. For every $N,M$ and $K\in\mathbb{N}$, we
take any bounded function $f\dvtx \Omega\times U^{\mathbb{N}}\rightarrow
\mathbb{R}$ such that $f(\cdot,(z_i)_{i\geq1})$ is independent of
$(z_i)_{i>K}$ and $\mathcal{B}_{-N}^+\cap\mathcal{B}_M^-$-measurable
for each $(z_i)_{i\geq1}$. Then
\begin{eqnarray}\label{mucan}
&&\int fd\overline{\mu}_\xi^\infty\nonumber\\[-8pt]\\[-8pt]
&&\qquad:=E_{o,o} \bigl[e^{\langle\theta,X_{N+M+K+1}\rangle- (N+M+K+1)\log
\phi(\theta)}f(T_{N,X_N}\omega,(Z_{N+i})_{i\geq1}) \bigr]\nonumber
\end{eqnarray}
where $X_i$ and $Z_i=X_i-X_{i-1}$ are as defined earlier.
\end{definition}

In Proposition \ref{welldefined}, we show that $\overline{\mu}_\xi
^\infty$ is well defined. In Proposition \ref{stationary}, we show that
$\overline{\mu}_\xi^\infty$ induces a stationary process $\mu_\xi^\infty
$ with values in $\Omega$. Let us define the events that we use in the
statement of the main theorem of Section \ref{Asection}.
\begin{definition}\label{events}
For every $\xi\in\mathrm{int} (\mathcal{D} )$, $N,M$ and $K\in\mathbb
{N}$, $f$ as in Definition \ref{definemu}, $\epsilon>0$ and $n\in\mathbb
{N}$, we define the event
%
\begin{equation}\label{A}
A_{\xi,n}^\epsilon(f):= \Biggl\{ \Biggl|\frac{1}{n}\sum_{j=0}^{n-1}f(T_{j,X_j}\omega
,(Z_{j+i})_{i\geq1})-\int f\,d\overline{\mu}_\xi^\infty
\Biggr|>\epsilon\Biggr\}.
\end{equation}
Given $\delta>0$, we define the event
%
\begin{equation}\label{D}
D_{\xi,n}^\delta:= \biggl\{ \biggl|\frac{X_n}{n}-\xi\biggr|\leq\delta\biggr\}.
\end{equation}
\end{definition}

Finally, we prove the following theorem.

\begin{thm}\label{averagedconditioning}
For every $\xi\in\mathrm{int} (\mathcal{D} )$, $N,M$ and $K\in\mathbb
{N}$, $f$ as in Definition \ref{definemu} and $\epsilon>0$, there
exists $\delta_o>0$ such that for every $\delta>0$ with $\delta<\delta
_o$
\[
\limsup_{n\rightarrow\infty}\frac{1}{n}\log P_{o,o} (A_{\xi,n}^\epsilon
(f) |D_{\xi,n}^\delta )<0
\]
where the events $A_{\xi,n}^\epsilon(f)$ and $D_{\xi,n}^\delta$ are
defined in (\ref{A}) and (\ref{D}), respectively.
\end{thm}

In Section \ref{Qsection}, we focus on the quenched measure. Varadhan
\cite{Raghu} proves the quenched LDP for the mean velocity of the
particle for the related model of RWRE. In our case, even though we can
think of $(n,X_n)_{n\geq0}$ as RWRE on $\mathbb{Z}^{d+1}$, Varadhan's
result is not directly applicable since he assumes that the environment
is uniformly elliptic, which we do not have in the ``time'' direction.
However, one expects that a modification of his argument should work.
Instead of taking this route, we develop an alternative technique and prove
the following theorem.
\begin{thm}\label{AequalsQ}
If $d\geq3$, then there exists $\eta>0$ such that the quenched LDP for
the mean velocity of the particle holds in the $\eta$-neighborhood of
$\xi_o$ and the rate function is identically equal to the rate function
$I_a$ of the averaged LDP in this neighborhood.
\end{thm}

\begin{remark}
Theorem \ref{AequalsQ} is similar in flavor to the results of Flury
\cite{Flury}, Song and Zhou \cite{Song} and Zygouras \cite{Nikos} on
the related model of random walk with a random potential. For
multidimensional random walk in a product environment, Varadhan \cite
{Raghu} proves that the averaged LDP also holds and the corresponding
rate function has the same zero set with the quenched rate function.
However, it is not known whether the two rate functions agree on a
neighborhood of the LLN velocity. The case of RWRE on $\mathbb{Z}$ is
closely studied by Comets, Gantert and Zeitouni \cite{CGZ} who in
particular show that there is no neighborhood of the LLN velocity on
which the two rate functions agree.
\end{remark}

Once again, conditioned on the particle having asymptotic mean velocity
$\xi$, we can ask what the empirical process of the environment Markov
chain converges to, but this time under the quenched measure. Whenever
the quenched LDP for the mean velocity holds in a neighborhood of $\xi$
with rate $I_a(\xi)$ at $\xi$---in particular when $d\geq3$ and $|\xi
-\xi_o|<\eta$---one expects that the answer should be again $\mu_\xi
^\infty$. The following theorem is a result to this effect.

\begin{thm}\label{strong}
For every $\xi\in\mathrm{int} (\mathcal{D} )$, if the quenched LDP for
the mean velocity holds in a neighborhood of $\xi$ with rate $I_a(\xi)$
at $\xi$, then
for every $N,M$ and $K\in\mathbb{N}$, $f$ as in Definition \ref
{definemu} and $\epsilon>0$, there exists $\delta_o>0$ such that for
every $\delta>0$ with $\delta<\delta_o$, $\mathbb{P}$-a.s.
\[
\limsup_{n\rightarrow\infty}\frac{1}{n}\log P_{o,o}^\omega(A_{\xi
,n}^\epsilon(f) |D_{\xi,n}^\delta )<0,
\]
where the events $A_{\xi,n}^\epsilon(f)$ and $D_{\xi,n}^\delta$ are
defined in (\ref{A}) and (\ref{D}), respectively.
\end{thm}

Finally, in Section \ref{Doobsection}, we reveal the structure of $\mu
_\xi^\infty$.

\begin{thm}\label{doob}
For $d\geq3$ and $|\xi-\xi_o|<\eta$ with $\eta$ as in Theorem \ref
{AequalsQ}, we let $\theta\in\mathbb{R}^d$ be the unique solution of
$\xi=\nabla\log\phi(\theta)$. There exists a $\mathcal
{B}_o^+$-measurable function $u^\theta>0$ which satisfies $\int u^\theta
\,d\mathbb{P} = 1$ and $\mathbb{P}$-a.s.
\[
u^\theta(\omega)=\sum_{z\in U}\overline{\pi}(\omega,T_{1,z}\omega
)e^{\langle\theta,z\rangle-\log\phi(\theta)}u^\theta
(T_{1,z}\omega).
\]
We define a transformed kernel $\overline{\pi}^\theta$ on $\Omega$ by
\[
\overline{\pi}^\theta(\omega,T_{1,z}\omega) = \overline{\pi}(\omega
,T_{1,z}\omega)\frac{u^\theta(T_{1,z}\omega)}{u^\theta(\omega)}
e^{\langle\theta,z\rangle-\log\phi(\theta)}.
\]
$\mu_\xi^\infty$ is the unique stationary Markov process with
transition kernel $\overline{\pi}^\theta$ and whose marginal $\mu_\xi
^1$ is absolutely continuous relative to $\mathbb{P}$ on every $\mathcal
{B}_n^+$. 
\end{thm}

In other words, conditioned on the particle having asymptotic mean
velocity $\xi$, the environment Markov chain chooses to switch from
kernel $\overline{\pi}$ to kernel $\overline{\pi}^\theta$. The most
economical tilt in terms of large deviations is given by a Doob $h$-transform.

\section{Conditioning under the averaged measure}\label{Asection}

For any $n\in\mathbb{N}$ and $\theta\in\mathbb{R}^d$, since the
environment is i.i.d.,
%
\begin{equation}
E_{o,o} \bigl[e^{\langle\theta,X_n\rangle} \bigr]=\phi(\theta)^n\label{budur}
\end{equation}
with the notation in (\ref{fi}). By Cram\'{e}r's theorem, the LDP for
the mean velocity of the particle holds under $P_{o,o}$ with the rate
function $I_a$ given by
%
\begin{equation}\label{Ia}
I_a(\xi)=\sup_{\theta'} \{\langle\theta',\xi\rangle-\log\phi(\theta')
\} = \langle\theta,\xi\rangle-\log\phi(\theta),
\end{equation}
where $\theta$ is the unique solution of $\xi=\nabla\log\phi(\theta)$.
Due to our nearest-neighbor and ellipticity assumptions,
\[
\mathcal{D}= \{\xi\in\mathbb{R}^d \dvtx I_a(\xi)<\infty\}= \{(\xi^1,\ldots
,\xi^d)\in\mathbb{R}^d \dvtx|\xi^1|+\cdots+|\xi^d|\leq1 \}.
\]

\begin{proposition}\label{welldefined}
For every $\xi\in\mathrm{int} (\mathcal{D} )$, the measure $\overline
{\mu}_\xi^\infty$ on $\Omega\times U^\mathbb{N}$, given in Definition
\ref{definemu}, is well defined.
\end{proposition}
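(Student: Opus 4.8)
The plan is to show that the right-hand side of (\ref{mucan}) is one and the same for every triple $(N,M,K)$ for which $f$ satisfies the measurability requirements of Definition \ref{definemu}. These requirements are monotone in the triple: since $\mathcal{B}_{-N}^+\subseteq\mathcal{B}_{-N-1}^+$ and $\mathcal{B}_M^-\subseteq\mathcal{B}_{M+1}^-$, and since a function not depending on $(z_i)_{i>K}$ also does not depend on $(z_i)_{i>K+1}$, an $f$ admissible for $(N,M,K)$ is admissible for $(N{+}1,M,K)$, for $(N,M{+}1,K)$ and for $(N,M,K{+}1)$ as well. Any two admissible triples therefore have a common admissible upper bound, so it suffices to prove that the value of the right-hand side of (\ref{mucan}) is unchanged when exactly one of $N,M,K$ is increased by one.

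Increasing $M$ or $K$ by one is the easy case. The exponent $\langle\theta,X_{N+M+K+1}\rangle-(N+M+K+1)\log\phi(\theta)$ becomes $\langle\theta,X_{N+M+K+2}\rangle-(N+M+K+2)\log\phi(\theta)$, while the factor $f(T_{N,X_N}\omega,(Z_{N+i})_{i\geq1})$ is left untouched (an $f$ admissible for $(N,M,K)$ genuinely sees neither $z_{K+1}$ nor the extra environment layer), so the integrand picks up the multiplicative factor $\mathrm{e}^{\langle\theta,Z_{N+M+K+2}\rangle-\log\phi(\theta)}$. Let $\mathcal{G}_n$ be the $\sigma$-algebra generated by $X_0,\ldots,X_n$ together with the environment layers $\omega_{j,\cdot}$, $0\leq j\leq n-1$. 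The original integrand is $\mathcal{G}_{N+M+K+1}$-measurable, while $\omega_{N+M+K+1,\cdot}$ is independent of $\mathcal{G}_{N+M+K+1}$ with the $\mathbb{P}$-law of $\omega_{0,\cdot}$; hence, by the i.i.d.\ assumption, $E_{o,o}[\mathrm{e}^{\langle\theta,Z_{N+M+K+2}\rangle}\mid\mathcal{G}_{N+M+K+1}]=\sum_{z\in U}q(z)\mathrm{e}^{\langle\theta,z\rangle}=\phi(\theta)$. Taking the conditional expectation given $\mathcal{G}_{N+M+K+1}$ shows the extra factor integrates to $1$, so the value does not change.

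Increasing $N$ by one is the substantive step, and I would handle it by a one-step decomposition of the walk. Writing $E_{o,o}[\,\cdot\,]=\mathbb{E}\!\left[\sum_{z\in U}\pi_{0,1}(0,z)\,E_{1,z}^{\omega}[\,\cdot\,]\right]$ and then re-centering the walk started from $(1,z)$ as a walk $(X'_m)_{m\geq0}$ from $(o,o)$ in the shifted environment $T_{1,z}\omega$, one has $X_{N+1+m}=z+X'_{N+m}$, hence $T_{N+1,X_{N+1}}\omega=T_{N,X'_N}(T_{1,z}\omega)$ and $(Z_{N+1+i})_{i\geq1}=(Z'_{N+i})_{i\geq1}$, while the exponent of the $(N{+}1,M,K)$ formula factors as $\bigl(\langle\theta,z\rangle-\log\phi(\theta)\bigr)+\bigl(\langle\theta,X'_{N+M+K+1}\rangle-(N+M+K+1)\log\phi(\theta)\bigr)$. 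Because an $f$ admissible for $(N,M,K)$ depends on its first argument only through the environment layers in the slab $[-N,M]$, the term $f(T_{N,X'_N}(T_{1,z}\omega),(Z'_{N+i})_{i\geq1})$ — and likewise $X'_{N+M+K+1}$ — is a function of $T_{1,z}\omega$ and the walk's increments only, and in particular does not involve the layer $\omega_{0,\cdot}$ on which $\pi_{0,1}(0,z)$ depends. Using that $\mathbb{P}$ is a product measure invariant under $T_{1,z}$ (so $T_{1,z}\omega$ has law $\mathbb{P}$) and that $\mathbb{E}[\pi_{0,1}(0,z)]=q(z)$, the $(N{+}1,M,K)$ right-hand side of (\ref{mucan}) equals $\Bigl(\sum_{z\in U}q(z)\mathrm{e}^{\langle\theta,z\rangle-\log\phi(\theta)}\Bigr)$ times the $(N,M,K)$ right-hand side, and $\sum_{z\in U}q(z)\mathrm{e}^{\langle\theta,z\rangle}=\phi(\theta)$ makes this prefactor equal to $1$.

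The delicate point is the last step: one must keep careful track of exactly which environment layers each factor depends on, and verify that after re-centering the $f$-term is genuinely independent of $\omega_{0,\cdot}$, so that it factors against $\pi_{0,1}(0,z)$. Everything else reduces to the identity $E_{o,o}[\mathrm{e}^{\langle\theta,X_n\rangle}]=\phi(\theta)^n$ from (\ref{budur}) and the i.i.d.\ structure, and boundedness of $f$ together with (\ref{budur}) makes all the expectations above finite and legitimizes the manipulations.
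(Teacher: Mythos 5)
Your proof is correct and follows essentially the same approach as the paper: reduce to one-step increases of $N$, $M$, or $K$; for $M$ and $K$, integrate out the freshly appended step using independence of the last environment layer (the paper spells this out as the $\mathcal{B}_{L-1}^-$ vs.\ $\mathcal{B}_L^+$ split, you phrase it as a conditional expectation given $\mathcal{G}_{L}$); and for $N$, decompose on the first step, factor the layer-$0$ dependence against the rest, and invoke shift-invariance of $\mathbb{P}$. The re-centering via $T_{1,z}$ in your $N$-step is exactly the paper's passage from (\ref{gozum}) to (\ref{kaydir}).
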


\begin{pf}
For every $N,M$ and $K\in\mathbb{N}$, we consider any $f$ as in
Definition \ref{definemu}. We set $L:=N+M+K+1$. Since $f(\cdot
,(z_i)_{i\geq1})$ is independent of $(z_i)_{i>K}$ and $\mathcal
{B}_{-N}^+\cap\mathcal{B}_M^-$-measurable for each $(z_i)_{i\geq1}$, we
see that for every $N',M'$ and $K'\in\mathbb{N}$ with $N\leq N'$,
$M\leq M'$ and $K\leq K'$, $f(\cdot,(z_i)_{i\geq1})$ is independent of
$(z_i)_{i>K'}$ and $\mathcal{B}_{-N'}^+\cap\mathcal
{B}_{M'}^-$-measurable for each $(z_i)_{i\geq1}$ as well. So, we need
to show that (\ref{mucan}) does not change if we replace $N$ by $N+1$,
$M$ by $M+1$, or $K$ by $K+1$.

Let us start with the argument for $N$. We observe that
%
\begin{eqnarray}\label{bagimsiz}
&&E_{o,o} \bigl[e^{\langle\theta,X_{L+1}\rangle-(L+1)\log\phi(\theta
)}f(T_{N+1,X_{N+1}}\omega,(Z_{N+1+i})_{i\geq1}) \bigr]\nonumber\\
&&\qquad=\sum_{x}\mathbb{E} \bigl(E_{o,o}^\omega\bigl[e^{\langle\theta
,X_1\rangle-\log\phi(\theta)}, X_1=x \bigr] \\
&&\quad\qquad\hspace*{25pt}
{}\times E_{1,x}^\omega\bigl[e^{\langle\theta,X_{L+1}-X_1\rangle
-L\log\phi(\theta)}f(T_{N+1,X_{N+1}}\omega,(Z_{N+1+i})_{i\geq1}) \bigr]
\bigr)\nonumber\\
&&\qquad=\sum_{x}E_{o,o} \bigl[e^{\langle\theta,X_1\rangle-\log
\phi(\theta)}, X_1=x \bigr]\label{gozum}\\
&&\qquad\quad\hspace*{13pt}{}    \times E_{1,x} \bigl[e^{\langle\theta,X_{L+1}-X_1\rangle-L\log\phi(\theta
)}f(T_{N+1,X_{N+1}}\omega,(Z_{N+1+i})_{i\geq1}) \bigr]\nonumber\\
&&\qquad=\sum_{x}E_{o,o} \bigl[e^{\langle\theta,X_1\rangle-\log\phi(\theta
)}, X_1=x \bigr]\label{kaydir}\\
&&\quad\qquad\hspace*{13pt}{}    \times E_{o,o} \bigl[e^{\langle
\theta,X_L\rangle-L\log\phi(\theta)}f(T_{N,X_N}\omega,(Z_{N+i})_{i\geq
1}) \bigr]\nonumber\\
&&\qquad=E_{o,o} \bigl[e^{\langle\theta,X_L\rangle-L\log\phi(\theta
)}f(T_{N,X_N}\omega,(Z_{N+i})_{i\geq1}) \bigr]\nonumber
\end{eqnarray}
holds. We note that each term of the sum in (\ref{bagimsiz}) is the
$\mathbb{P}$-expectation of two random variables; the first one is
$\mathcal{B}_0^-$-measurable and the second one is $\mathcal
{B}_1^+$-measurable. We make use of this independence to obtain (\ref
{gozum}). We also note that we use the stationarity of $\mathbb{P}$
under shifts to obtain (\ref{kaydir}) from (\ref{gozum}). Hence, (\ref
{mucan}) does not change if we replace $N$ by $N+1$.

Similarly, if we replace $M$ by $M+1$ in (\ref{mucan}),
\begin{eqnarray} \label{baris}
&&E_{o,o} \bigl[e^{\langle\theta,X_{L+1}\rangle-(L+1)\log\phi(\theta
)}f(T_{N,X_N}\omega,(Z_{N+i})_{i\geq1}) \bigr]\nonumber\\
&&\qquad=\sum_{x}\mathbb{E} \bigl(E_{o,o}^\omega\bigl[e^{\langle\theta
,X_L\rangle-L\log\phi(\theta)}f(T_{N,X_N}\omega,(Z_{N+i})_{i\geq1}),
X_L=x \bigr]
\\
&&\qquad\quad\hspace*{136pt}{}       \times E_{L,x}^\omega\bigl[
e^{\langle\theta,X_{L+1}-X_L\rangle-\log\phi(\theta)} \bigr]
\bigr)\nonumber\\
&&\qquad=\sum_{x}E_{o,o} \bigl[e^{\langle\theta,X_L\rangle-L\log\phi(\theta
)}f(T_{N,X_N}\omega,(Z_{N+i})_{i\geq1}), X_L=x \bigr]\nonumber\\
&&\qquad\quad\phantom{\sum_{x}}{}    \times E_{L,x} \bigl[e^{\langle\theta,X_{L+1}-X_L\rangle-\log\phi
(\theta)} \bigr]\nonumber\\
&&\qquad=E_{o,o} \bigl[e^{\langle\theta,X_L\rangle-L\log\phi(\theta
)}f(T_{N,X_N}\omega,(Z_{N+i})_{i\geq1}) \bigr]\nonumber
\end{eqnarray}
where, once again, we use the fact that each term of the sum in (\ref
{baris}) is the $\mathbb{P}$-expectation of two random variables; the
first one is $\mathcal{B}_{L-1}^-$-measurable and the second one is
$\mathcal{B}_L^+$-measurable.

We finally note that the argument for $K$ is the same as the one for $M$.
\end{pf}

\begin{proposition}\label{stationary}
$\overline{\mu}_\xi^\infty$ induces a stationary process $\mu_\xi^\infty
$ with values in $\Omega$.
\end{proposition}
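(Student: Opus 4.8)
The plan is to first extract from Definition \ref{definemu} the marginal measure on $\Omega$ alone. Taking $K=0$ (so that $f$ depends only on the environment component) and writing $\mu_\xi^1$ for the resulting measure on $\Omega$, we get
\[
\int g\,\mathrm{d}\mu_\xi^1 = E_{o,o}\left[\mathrm{e}^{\langle\theta,X_{N+M+1}\rangle-(N+M+1)\log\phi(\theta)}g(T_{N,X_N}\omega)\right]
\]
for any bounded $\mathcal{B}_{-N}^+\cap\mathcal{B}_M^-$-measurable $g$. More generally, by Proposition \ref{welldefined} the finite-dimensional distributions of a candidate process are consistent, so $\overline{\mu}_\xi^\infty$ defines a measure on sequences $(T_{n,X_n}\omega)_{n\geq 0}$ through its pushforward under the map $(\omega,(z_i)_{i\geq 1})\mapsto(\omega, T_{1,z_1}\omega, T_{2,z_1+z_2}\omega,\dots)$. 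Call this process $\mu_\xi^\infty$, a measure on $\Omega^{\mathbb{N}}$. The task is to show it is shift-invariant, i.e. that the distribution of $(\omega_n)_{n\geq 0}$ under $\mu_\xi^\infty$ equals the distribution of $(\omega_{n+1})_{n\geq 0}$.

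The key step is to verify shift-invariance on cylinder functions. Fix a bounded $F$ depending on finitely many coordinates $\omega_0,\dots,\omega_{m}$, each factor being $\mathcal{B}$-measurable in the appropriate finite window; such an $F$ corresponds to a function $f$ on $\Omega\times U^{\mathbb{N}}$ of the type in Definition \ref{definemu} (with $K=m$, and with $N,M$ chosen large enough to contain the spatial window swept out by a nearest-neighbor path of length $m$). I would then compute both $\int f\,\mathrm{d}\overline{\mu}_\xi^\infty$ (which gives $\mathbb{E}_{\mu_\xi^\infty}[F(\omega_0,\dots,\omega_m)]$) and $\int (f\circ\text{shift})\,\mathrm{d}\overline{\mu}_\xi^\infty$ (which gives $\mathbb{E}_{\mu_\xi^\infty}[F(\omega_1,\dots,\omega_{m+1})]$), and show they agree. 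Writing $L=N+M+K+1$, the first is $E_{o,o}[\mathrm{e}^{\langle\theta,X_L\rangle-L\log\phi(\theta)}f(T_{N,X_N}\omega,(Z_{N+i})_{i\geq 1})]$; the shifted one evaluates $f$ at $(T_{N+1,X_{N+1}}\omega,(Z_{N+1+i})_{i\geq 1})$, and since shifting the path index by one is exactly the operation handled in the ``argument for $N$'' inside the proof of Proposition \ref{welldefined}, the computation (\ref{bagimsiz})--(\ref{kaydir}) — peeling off the first step using the $\mathcal{B}_0^-$ / $\mathcal{B}_1^+$ independence, then using stationarity of $\mathbb{P}$ under shifts, together with the normalization $E_{o,o}[\mathrm{e}^{\langle\theta,X_1\rangle-\log\phi(\theta)}]=1$ which follows from (\ref{budur}) — shows the two expressions coincide. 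Hence $\mu_\xi^\infty$ is stationary.

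I expect the only real subtlety to be bookkeeping: one must check that an arbitrary cylinder function $F$ of $\omega_0,\dots,\omega_m$ really can be represented by some admissible $f$ in the sense of Definition \ref{definemu}, which requires noting that along a nearest-neighbor path the environments $T_{j,X_j}\omega$ for $0\leq j\leq m$ are all $\mathcal{B}_{-N}^+\cap\mathcal{B}_M^-$-measurable functions of $\omega$ for $N,M$ large (depending on $m$ and the fixed window sizes of $F$), and that the dependence on the increments $(Z_i)$ is only through $(z_1,\dots,z_m)$. Given that, the identity $\mathbb{E}_{\mu_\xi^\infty}[F(\omega_0,\dots,\omega_m)]=\mathbb{E}_{\mu_\xi^\infty}[F(\omega_1,\dots,\omega_{m+1})]$ is precisely the $N\mapsto N+1$ invariance already established, so no new computation is needed beyond invoking Proposition \ref{welldefined}. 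Finally, stationarity on cylinder functions extends to all bounded measurable functions by a standard monotone class / density argument, completing the proof.
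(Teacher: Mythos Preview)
Your proposal is correct and follows essentially the same route as the paper: both arguments reduce stationarity to the invariance of the defining formula (\ref{mucan}) under a unit shift, which is exactly the content of Proposition~\ref{welldefined}. The paper packages this slightly more economically by introducing the shift $\bar{S}:(\omega,(z_i)_{i\ge1})\mapsto(T_{1,z_1}\omega,(z_i)_{i\ge2})$ on $\Omega\times U^{\mathbb{N}}$, observing that $f\circ\bar{S}$ is admissible with parameters $(N-1,M+1,K+1)$, and then reading off $\int f\circ\bar{S}\,\mathrm{d}\overline{\mu}_\xi^\infty=\int f\,\mathrm{d}\overline{\mu}_\xi^\infty$ directly from Definition~\ref{definemu} and Proposition~\ref{welldefined}; your version pushes forward to $\Omega^{\mathbb{N}}$ first and cites the ``$N$-argument'' explicitly, but the computation is the same.
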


\begin{pf}
We define $\bar{S}\dvtx \Omega\times U^{\mathbb{N}}\rightarrow\Omega\times
U^{\mathbb{N}}$ by
\[
\bar{S}\dvtx (\omega,(z_i)_{i\geq1} )\mapsto (T_{1,z_1}\omega,(z_i)_{i\geq
2} )
\]
and the projection map $\Psi\dvtx\Omega\times U^{\mathbb{N}}\rightarrow
\Omega$ by $\Psi\dvtx (\omega,(z_i)_{i\geq1} )\mapsto\omega.$ Let us show
that $\overline{\mu}_\xi^\infty$ is invariant under $\bar{S}$:

For every $N,M$ and $K\in\mathbb{N}$, $f$ as in Definition \ref
{definemu} and $(z_i)_{i\geq1}$, we see that $f\circ\bar{S} (\omega
,(z_i)_{i\geq1} )=f (T_{1,z_1}\omega,(z_i)_{i\geq2} )$ is $\mathcal
{B}_{-(N-1)}^+\cap\mathcal{B}_{M+1}^-$-measurable and independent of
$(z_i)_{i>K+1}$. By definition,
\begin{eqnarray*}
&&\int f\circ\bar{S}\,d\overline{\mu}_\xi^\infty\\
&&\qquad=E_{o,o} \bigl[e^{\langle\theta,X_{N+M+K+2}\rangle- (N+M+K+2)\log
\phi(\theta)}\\
&&\qquad\phantom{=E_{o,o} \bigl[}
{}\times f\circ\bar{S}\bigl(T_{N-1,X_{N-1}}\omega,\bigl(Z_{(N-1)+i}\bigr)_{i\geq
1}\bigr) \bigr]\\
&&\qquad=E_{o,o} \bigl[e^{\langle\theta,X_{N+M+K+2}\rangle-
(N+M+K+2)\log\phi(\theta)}f(T_{N,X_N}\omega,(Z_{N+i})_{i\geq1}) \bigr]\\
&&\qquad=\int f\,d\overline{\mu}_\xi^\infty.
\end{eqnarray*}

Therefore, under $\overline{\mu}_\xi^\infty$, $ (\Psi\circ\bar
{S}^k(\cdot) )_{k\geq0}$ extends to a stationary process with values in
$\Omega$, whose distribution we denote by $\mu_\xi^\infty$.
\end{pf}

\begin{pf*}{Proof of Theorem \ref{averagedconditioning}}
Under the hypotheses of the theorem, we take any $\epsilon>0$, $\delta
>0$ and recall (\ref{A}) and (\ref{D}). We define the event
%
\begin{equation}\label{A+}
A_{\xi,n}^{\epsilon,+}(f):= \Biggl\{\frac{1}{n}\sum
_{j=0}^{n-1}f(T_{j,X_j}\omega,(Z_{j+i})_{i\geq1})-\int f\,d\overline{\mu}_\xi^\infty>\epsilon\Biggr\}.
\end{equation}
We similarly define $A_{\xi,n}^{\epsilon,-}(f)$ and have $A_{\xi
,n}^\epsilon(f)=A_{\xi,n}^{\epsilon,+}(f)\cup A_{\xi,n}^{\epsilon
,-}(f)$. So, it suffices to prove Theorem \ref{averagedconditioning}
for only, say, $A_{\xi,n}^{\epsilon,+}(f)$.

To simplify the notation, we write
%
\begin{equation}
F_j := f(T_{j,X_j}\omega,(Z_{j+i})_{i\geq1})-\int f\,d\overline
{\mu}_\xi^\infty.\label{F}
\end{equation}
Then (\ref{A+}) becomes
\[
A_{\xi,n}^{\epsilon,+}(f)= \Biggl\{\frac{1}{n}\sum_{j=0}^{n-1}F_j>\epsilon\Biggr\}.
\]

Since $\xi\in\mathrm{int} (\mathcal{D} )$, the unique solution $\theta$
of $\xi=\nabla\log\phi(\theta)$ satisfies
\[
I_a(\xi)=\langle\theta,\xi\rangle-\log\phi(\theta).
\]
By a standard change of measure argument and the averaged LDP, we see
that for any $z>0$,
\begin{eqnarray}\label{cebi}
&&\limsup_{n\rightarrow\infty}\frac{1}{n}\log P_{o,o} (A_{\xi
,n}^{\epsilon,+}(f) |D_{\xi,n}^\delta )\nonumber\\
&&\qquad\leq \limsup
_{n\rightarrow\infty}\frac{1}{n}\log P_{o,o} (A_{\xi,n}^{\epsilon
,+}(f),D_{\xi,n}^\delta)-\liminf_{n\rightarrow\infty}\frac{1}{n}\log
P_{o,o} (D_{\xi,n}^\delta)\nonumber\\
&&\qquad\leq \limsup_{n\rightarrow\infty
}\frac{1}{n}\log E_{o,o} \biggl[e^{\langle\theta,X_n\rangle},A_{\xi
,n}^{\epsilon,+}(f),
\biggl|\frac{X_n}{n}-\xi\biggr|\leq\delta\biggr]\nonumber\\[-8pt]\\[-8pt]
&&\quad\qquad{} -
\langle\theta,\xi\rangle+ I_a(\xi) + |\theta|\delta\nonumber\\
&&\qquad{}\leq
\limsup_{n\rightarrow\infty}\frac{1}{n}\log E_{o,o} \bigl[
e^{\langle\theta,X_n\rangle-n\log\phi(\theta)},A_{\xi,n}^{\epsilon
,+}(f) \bigr] + |\theta|\delta\nonumber\\
&&\qquad\leq \limsup_{n\rightarrow\infty
}\frac{1}{n}\log E_{o,o} \bigl[e^{\langle\theta,X_n\rangle-n\log\phi
(\theta) + z\sum_{j=0}^{n-1}F_j} \bigr] - z\epsilon+ |\theta|\delta,\nonumber
\end{eqnarray}
where the last line is obtained by Chebyshev's inequality. We set $L :=
N+M+K+1$ and note that
\begin{eqnarray}\label{carpim}
&&E_{o,o} \bigl[e^{\langle\theta,X_n\rangle-n\log\phi(\theta) + z\sum
_{j=0}^{n-1}F_j} \bigr]\nonumber\\[-8pt]\\[-8pt]
&&\qquad\leq \prod_{i=0}^{L-1}E_{o,o} \bigl[
e^{\langle\theta,X_n\rangle-n\log\phi(\theta) +
Lz(F_i+F_{L+i}+F_{2L+i}+\cdots)} \bigr]^{{1}/{L}}\nonumber
\end{eqnarray}
holds by an application of H\"{o}lder's inequality under
$e^{\langle\theta,X_n\rangle-n\log\phi(\theta)}\,dP_{o,o}$.

For any $i\in\{0,\ldots,L-1\}$, we let $k=k(i)$ be the largest integer
such that $kL+i<n$. Then for $n\geq2L,$
\begin{eqnarray*}
&&E_{o,o} \bigl[e^{\langle\theta,X_n\rangle-n\log\phi(\theta) +
Lz(F_i+\cdots+F_{(k-1)L+i}+F_{kL+i})} \bigr]\\
&&\qquad= \sum_{x}\mathbb{E} \bigl(
E_{o,o}^\omega\bigl[e^{\langle\theta,X_{kL+i-N}\rangle-(kL+i-N)\log
\phi(\theta) + Lz(F_i+\cdots+F_{(k-1)L+i})},\\
&&\hspace*{240pt}
X_{kL+i-N}=x \bigr] \\
&&\hspace*{27pt}\qquad\quad{}\times E_{kL+i-N,x}^\omega\bigl[e^{\langle\theta
,X_n-X_{kL+i-N}\rangle-(n-(kL+i-N))\log\phi(\theta) + Lz(F_{kL+i})} \bigr] \bigr).
\end{eqnarray*}
Each term of the above sum is the $\mathbb{P}$-expectation of the
product of two random variables and these variables are independent
since for every $(z_i)_{i\geq1}$, $f(\cdot,(z_i)_{i\geq1})$ is $\mathcal
{B}_{-N}^+\cap\mathcal{B}_M^-$-measurable and independent of
$(z_i)_{i>K}$. Using this and the fact that $\mathbb{P}$ is invariant
under shifts, we write
\begin{eqnarray*}
&&E_{o,o} \bigl[e^{\langle\theta,X_n\rangle-n\log\phi(\theta) +
Lz(F_i+\cdots+F_{(k-1)L+i}+F_{kL+i})} \bigr]\\
&&\qquad=\sum_{x}E_{o,o} \bigl[e^{\langle\theta,X_{kL+i-N}\rangle-(kL+i-N)\log\phi(\theta) +
Lz(F_i+\cdots+F_{(k-1)L+i})},X_{kL+i-N}=x \bigr]\\
&&\qquad\quad\hspace*{13pt}{}\times E_{o,o} \bigl[
e^{\langle\theta,X_{n-(kL+i-N)}\rangle-(n-(kL+i-N))\log\phi(\theta) +
LzF_N} \bigr]\\
&&\qquad=E_{o,o} \bigl[e^{\langle\theta,X_{kL+i-N}\rangle
-(kL+i-N)\log\phi(\theta) + Lz(F_i+\cdots+F_{(k-1)L+i})} \bigr]\\
&&\quad\qquad{}\times
E_{o,o} \bigl[e^{\langle\theta,X_{n-(kL+i-N)}\rangle
-(n-(kL+i-N))\log\phi(\theta) + LzF_N} \bigr].
\end{eqnarray*}
Iterating this, we get
\begin{eqnarray*}
&&E_{o,o} \bigl[e^{\langle\theta,X_n\rangle-n\log\phi(\theta) +
Lz(F_i+\cdots+F_{(k-1)L+i}+F_{kL+i})} \bigr]\\
&&\qquad=E_{o,o} \bigl[e^{\langle
\theta,X_{L+i-N}\rangle-(L+i-N)\log\phi(\theta) + LzF_i} \bigr]\\
&&\quad\qquad{}\times
E_{o,o} \bigl[e^{\langle\theta,X_L\rangle-L\log\phi(\theta) +
LzF_N} \bigr]^{k-1}\\
&&\quad\qquad{}\times E_{o,o} \bigl[e^{\langle\theta
,X_{n-(kL+i-N)}\rangle-(n-(kL+i-N))\log\phi(\theta) + LzF_N} \bigr].
\end{eqnarray*}
Since $n-(kL+i-N)\leq L+N <\infty$ and $f$ is bounded, the first and
the last terms of the above product are bounded and
\begin{eqnarray*}
&&\lim_{n\rightarrow\infty}\frac{1}{n}\log E_{o,o} \bigl[e^{\langle
\theta,X_n\rangle-n\log\phi(\theta) + Lz(F_i+\cdots
+F_{(k-1)L+i}+F_{kL+i})} \bigr]\\
&&\qquad= \frac{1}{L}\log E_{o,o} \bigl[
e^{\langle\theta,X_L\rangle-L\log\phi(\theta) + LzF_N} \bigr].
\end{eqnarray*}
Recalling (\ref{carpim}), we now know that
\begin{eqnarray*}
&&\limsup_{n\rightarrow\infty}\frac{1}{n}\log E_{o,o} \bigl[
e^{\langle\theta,X_n\rangle-n\log\phi(\theta) + z\sum_{j=0}^{n-1}F_j}
\bigr]\\
&&\qquad\leq \frac{1}{L}\log E_{o,o} \bigl[e^{\langle\theta,X_L\rangle
-L\log\phi(\theta) + LzF_N} \bigr]\\
&&\qquad=: \zeta(z).
\end{eqnarray*}
Because of (\ref{cebi}), to conclude the proof, it suffices to show
that $\zeta(z)=o(z)$. But $\zeta(0)=0$ and we recall (\ref{F}) to see
that
\[
\zeta'(0)=E_{o,o} \bigl[e^{\langle\theta,X_L\rangle-L\log\phi(\theta
)}F_N \bigr]=0
\]
precisely by Definition \ref{definemu}. Hence, we are done.
\end{pf*}

\section{Conditioning under the quenched measure}\label{Qsection}

In this section, we obtain the function $u^\theta$ mentioned in the
statement of Theorem \ref{doob}, derive some of its properties and
define the transformed kernel $\overline{\pi}^\theta$ also mentioned in
Theorem \ref{doob}. Finally, having built the necessary machinery, we
prove Theorems \ref{AequalsQ} and \ref{strong}.

\subsection{The main estimate}

In the rest of the article, the following family of functions play a
central role:

\begin{definition}
For every $\theta\in\mathbb{R}^d$, $x\in\mathbb{Z}^d$ and $n,N\in\mathbb
{Z}$ with $n<N$, we define
%
\begin{equation}\label{babalar}
u_N^\theta(\omega,n,x):=\frac{E_{n,x}^\omega[e^{\langle\theta
,X_N-X_n\rangle} ]}{\phi(\theta)^{N-n}}
\end{equation}
where $\phi$ is given in (\ref{fi}).
\end{definition}

The main estimate that enables us to obtain $u^\theta$ and establish
the equivalence of quenched and averaged large deviations is given as
\begin{lemma}\label{eltu}
If $d\geq3$, then there exists $\bar{\eta}>0$ such that for every
$\theta\in\mathbb{R}^d$ with $|\theta|<\bar{\eta}$, $x\in\mathbb{Z}^d$
and $n\in\mathbb{Z}$, we have
\[
\sup_{N>n} \|u_N^\theta(\cdot,n,x) \|_{L^2(\mathbb{P})}<\infty.
\]
\end{lemma}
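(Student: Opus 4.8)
The plan is to bound the second moment of $u_N^\theta(\cdot,n,x)$ uniformly in $N$ by expanding the square as a sum over pairs of paths. By translation invariance of $\mathbb{P}$ we may take $n=0$, $x=0$. Write
\[
\mathbb{E}\left[u_N^\theta(\cdot,0,0)^2\right] = \phi(\theta)^{-2N}\,\mathbb{E}\left[E_{0,0}^\omega\!\left[\mathrm{e}^{\langle\theta,X_N\rangle}\right]E_{0,0}^{\omega}\!\left[\mathrm{e}^{\langle\theta,X'_N\rangle}\right]\right],
\]
where $(X_n)$ and $(X'_n)$ are two \emph{independent} walks run in the \emph{same} environment $\omega$. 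The key point is that because the environment is a space-time product, the two walks interact only at the space-time sites they both visit: at a time $m$, if $X_m = X'_m =: x$, then $\omega_{m,x}$ governs both increments and the quenched expectation factors with a correlation, whereas at any site visited by only one walk (or where the walks are at different positions) the $\mathbb{P}$-expectation factors completely and contributes exactly $\phi(\theta)$ per step per walk. So, writing $P_{0,0}^{\otimes 2}$ for the law of the pair of independent walks under the \emph{annealed} kernel $q$, and letting $\mathcal{R}_N := \#\{0\le m\le N : X_m = X'_m\}$ be the number of simultaneous meetings (``collisions'') of the two walks, a direct computation should give
\[
\mathbb{E}\left[u_N^\theta(\cdot,0,0)^2\right] = E_{0,0}^{\otimes 2}\!\left[\prod_{m : X_m = X'_m} \rho_\theta\!\left(\omega_{m,X_m}\right)\right]
\]
for an appropriate bounded factor, and after taking the $\mathbb{P}$-expectation this becomes $E^{\otimes 2}_{0,0}\big[c_\theta^{\,\mathcal{R}_N}\big]$ for a constant $c_\theta$ with $c_\theta \to 1$ as $\theta\to 0$. (One has to be slightly careful that the relevant weight is really a function of the collision count; the cleanest route is to change measure so that each walk's increments are weighted by $\mathrm{e}^{\langle\theta,z\rangle}/\phi(\theta)$, i.e. each walk individually becomes a biased random walk with drift $\xi$, and the only surviving $\omega$-dependence is a bounded multiplicative factor $\rho_\theta$ attached to each collision, with $\mathbb{E}[\rho_\theta]$ a constant $\le 1+C|\theta|^2$.)

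Granting that reduction, the problem becomes: show that the difference walk $Y_m := X_m - X'_m$, which under $P^{\otimes 2}_{0,0}$ (after the biasing) is a mean-zero nearest-neighbor-type random walk on $\mathbb{Z}^d$ with bounded increments and non-degenerate covariance, has a uniformly bounded expected weighted number of returns to the origin,
\[
\sup_N\; E^{\otimes 2}_{0,0}\!\left[c_\theta^{\,\mathcal{R}_N}\right] < \infty,
\]
for $c_\theta$ slightly larger than $1$. For $c_\theta = 1$ this is exactly the statement that a mean-zero random walk on $\mathbb{Z}^d$ with $d\ge 3$ is transient, so the expected total number of returns $E^{\otimes 2}_{0,0}[\mathcal{R}_\infty]$ is finite — this is where the hypothesis $d\ge 3$ enters, and it is the conceptual heart of the lemma. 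To handle $c_\theta>1$ one writes $E[c_\theta^{\mathcal{R}_\infty}] = \sum_{k\ge 0} c_\theta^k\, P(\mathcal{R}_\infty = k)$ and uses that, by the strong Markov property at successive return times, $P(\mathcal{R}_\infty \ge k) \le \gamma^k$ where $\gamma = \gamma(\theta) < 1$ is the return probability of $Y$ to $0$; since $\gamma(\theta)$ is continuous in $\theta$ and $\gamma(0)<1$ by transience, one can choose $\bar\eta$ small enough that $c_\theta \gamma(\theta) < 1$ for $|\theta| < \bar\eta$, making the geometric series converge, uniformly in $N$. Finally, since the meeting count $\mathcal{R}_N$ only involves the two walks' positions and the biasing factors are bounded, the bound for finite $N$ is dominated by the $N=\infty$ bound, giving the claimed $\sup_{N>n}$.

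I expect the main obstacle to be the bookkeeping in the first paragraph: carefully identifying, after the change of measure that turns each marginal walk into an independent biased walk, exactly what bounded weight is attached to each space-time collision and verifying it has $\mathbb{P}$-expectation bounded by $1 + C|\theta|^2$ (in particular equal to $1$ when $\theta = 0$, which is what makes the $d\ge3$ transience argument apply cleanly at $\theta=0$ and then perturbatively for small $\theta$). A secondary technical point is to confirm that $Y_m = X_m - X'_m$ is genuinely transient for $d\ge 3$ — its covariance is a positive multiple of the identity since both walks are isotropic up to the common drift $\xi$, which cancels in the difference, so $Y$ is a bona fide mean-zero walk with nondegenerate covariance and classical transience applies; one should also check it is aperiodic/irreducible enough that the return probability is strictly less than $1$, or work on the sublattice it actually lives on. Once these ingredients are in place, the estimate $\sup_{N>n}\|u_N^\theta(\cdot,n,x)\|_{L^2(\mathbb{P})}^2 \le E^{\otimes 2}_{0,0}[c_\theta^{\mathcal{R}_\infty}] < \infty$ follows immediately.
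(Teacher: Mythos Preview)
Your overall strategy---expand the second moment over pairs of paths in the same environment, tilt each walk by $q^\theta(z)=q(z)\mathrm{e}^{\langle\theta,z\rangle}/\phi(\theta)$, and exploit transience of the difference walk in $d\ge 3$---is exactly the paper's. The gap is in the bookkeeping you flagged yourself: the collision weight does \emph{not} reduce to a constant $c_\theta$ tending to $1$. After tilting, the exact identity is
\[
G_N(\theta)=\hat E_o^\theta\hat E_o^\theta\Bigl[\exp\Bigl(\sum_{i=0}^{N-1}\mathbf{1}_{\{X_i=Y_i\}}\,V(X_{i+1}-X_i,\,Y_{i+1}-Y_i)\Bigr)\Bigr],
\qquad
e^{V(x,y)}=\frac{\mathbb{E}[\pi_{0,1}(0,x)\pi_{0,1}(0,y)]}{q(x)q(y)},
\]
so the weight at each collision depends on the \emph{increments out of that collision} and carries no $\theta$-dependence whatsoever. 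Bounding it crudely by $e^{\bar V}$ gives $G_N(\theta)\le \hat E^\theta[e^{\bar V\mathcal{R}_N}]$, but $e^{\bar V}$ is governed by the ellipticity constant and can be large, so there is no reason $e^{\bar V}\gamma(\theta)<1$; the geometric-series step fails in general. Your assertion that ``$\mathbb{E}[\rho_\theta]\le 1+C|\theta|^2$'' is true only as an average over the first step, and one cannot replace a product of path-dependent weights by a power of their mean.

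What actually makes the argument go through is the exact identity $G_N(0)\equiv 1$, which the paper feeds into a renewal decomposition: writing $B_k(\theta)$ for the $q^\theta$-expectation of $e^{V(X_1,Y_1)}$ times the indicator that the next collision occurs $k$ steps later, one gets $G_N(\theta)=\sum_{k=0}^{N-2}B_k(\theta)G_{N-k-1}(\theta)+C_N(\theta)$. At $\theta=0$ this reads $1=\sum_{k\le N-2}B_k(0)+C_N(0)$, and since $C_N(0)\downarrow C_\infty(0)>0$ by transience, one obtains $B(0):=\sum_k B_k(0)<1$. Continuity of $\theta\mapsto B(\theta)$ at $0$ (via the local CLT for the difference walk) then gives $B(\theta)<1$ for small $|\theta|$, and the renewal inequality yields $\sup_N G_N(\theta)\le C(\theta)/(1-B(\theta))$. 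In short, the ``weighted return probability'' that must be shown to be $<1$ is $B(\theta)$, which entangles the weight $e^V$ with the first step; it cannot be factored as $c_\theta\cdot\gamma(\theta)$, and the way to see $B(0)<1$ is through the exact second-moment identity rather than a crude bound.
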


\begin{pf}
It suffices to prove the lemma for $n=0$ and $x=0$.
%
\begin{eqnarray}\label{aboo}
G_N(\theta)&:=& \|u_N^\theta(\cdot,0,0) \|_{L^2(\mathbb{P})}^2=\frac
{\mathbb{E} (E_{o,o}^\omega[e^{\langle\theta,X_N\rangle} ]^2
)}{\phi(\theta)^{2N}}\\
&\hspace*{3pt}=&\mathop{\sum_{x_o=0,x_1,\ldots,x_N}}_{ y_o=0,y_1,\ldots,y_N} \prod
_{i=0}^{N-1}\mathbb{E} (\pi_{i,i+1}(x_i,x_{i+1})\pi
_{i,i+1}(y_i,y_{i+1}) )\nonumber\\
&&\hspace*{70pt}
{}\times\frac{e^{\langle\theta
,x_{i+1}-x_i\rangle}}{\phi(\theta)}\frac{e^{\langle\theta
,y_{i+1}-y_i\rangle}}{\phi(\theta)}\nonumber\\
&\hspace*{3pt}=&\mathop{\sum_{x_o=0,x_1,\ldots,x_N}}_{ y_o=0,y_1,\ldots,y_N}\prod
_{i=0}^{N-1}\frac{\mathbb{E} (\pi_{i,i+1}(x_i,x_{i+1})\pi
_{i,i+1}(y_i,y_{i+1}) )}{q(x_{i+1}-x_i)q(y_{i+1}-y_i)}\nonumber\\
&&\hspace*{70pt}
{}\times q^\theta
(x_{i+1}-x_i)q^\theta(y_{i+1}-y_i),\nonumber
\end{eqnarray}
where $q^\theta(z):=q(z)\frac{e^{\langle\theta,z\rangle}}{\phi
(\theta)}$ for every $z\in U$. For every $x\in\mathbb{Z}^d$, we let
$\hat{P}_x^\theta$ be the probability measure on paths starting at $x$
and induced by $ (q^\theta(z) )_{z\in U}$. We write $\hat{E}_x^\theta$
to denote expectation with respect to $\hat{P}_x^\theta$.

We note that $\mathbb{E} (\pi_{i,i+1}(x_i,x_{i+1})\pi
_{i,i+1}(y_i,y_{i+1}) )=q(x_{i+1}-x_i)q(y_{i+1}-y_i)$ unless $x_i=y_i$.
For every $x,y\in U$, let us set
\[
V(x,y):=\log\biggl(\frac{\mathbb{E} (\pi_{0,1}(0,x)\pi_{0,1}(0,y)
)}{q(x)q(y)} \biggr).
\]
By uniform ellipticity, $V$ is bounded by some constant $\bar{V}$. With
this notation,
\[
G_N(\theta)=\hat{E}_o^\theta\hat{E}_o^\theta\bigl[e^{\sum
_{i=0}^{N-1}\delta_{X_i=Y_i}V(X_{i+1}-X_i,Y_{i+1}-Y_i)} \bigr].
\]
We let $\tau:=\inf\{k\geq0\dvtx X_k=Y_k \}$, $\tau^+:=\inf\{k>0\dvtx X_k=Y_k \}$ and decompose $G_N(\theta)$ with respect to the first steps
$X_1$ and $Y_1$:
\begin{eqnarray*}
G_N(\theta)&=&\sum_{x,y}q^\theta(x)q^\theta(y)e^{V(x,y)}\sum
_{k=0}^{N-2}\hat{P}_x^\theta\hat{P}_y^\theta(\tau=k )G_{N-k-1}(\theta
)\\
&&{}+\sum_{x,y}q^\theta(x)q^\theta(y)e^{V(x,y)}\hat{P}_x^\theta
\hat{P}_y^\theta(\tau\geq N-1 )\\
&=&\sum_{k=0}^{N-2} \Biggl(\sum_{x,y}q^\theta(x)q^\theta(y)
e^{V(x,y)}\hat{P}_x^\theta\hat{P}_y^\theta(\tau=k ) \Biggr)G_{N-k-1}(\theta
)\\
&&{}+\sum_{x,y}q^\theta(x)q^\theta(y)e^{V(x,y)}\hat{P}_x^\theta
\hat{P}_y^\theta(\tau\geq N-1 ).
\end{eqnarray*}
We simplify the last expression by defining
\begin{eqnarray*}
B_k(\theta)&:=&\sum_{x,y}q^\theta(x)q^\theta(y)e^{V(x,y)}\hat
{P}_x^\theta\hat{P}_y^\theta(\tau=k ),\\
C_N(\theta)&:=&\sum
_{x,y}q^\theta(x)q^\theta(y)e^{V(x,y)}\hat{P}_x^\theta\hat
{P}_y^\theta(\tau\geq N-1 )
\end{eqnarray*}
and obtain the following equation:
%
\begin{equation}\label{neis}
G_N(\theta)=\sum_{k=0}^{N-2}B_k(\theta)G_{N-k-1}(\theta)+C_N(\theta).
\end{equation}
Now, we use the dimension. For every $x,y$ such that $x\neq y$, under
the product measure $\hat{P}_x^\theta\hat{P}_y^\theta$,
$(X_i-Y_i)_{i\geq0}$ is a symmetric random walk and since $d\geq3$, it
has positive probability of never hitting the origin. Therefore,
\[
\lim_{N\rightarrow\infty}C_N(\theta)=\inf_{N}C_N(\theta)
=\sum_{x,y}q^\theta(x)q^\theta(y)e^{V(x,y)}\hat{P}_x^\theta\hat
{P}_y^\theta(\tau=\infty)>0.
\]
By (\ref{aboo}), we know that $G_M(0)=1$ for every $M$. Plugging it in
(\ref{neis}), we get
\[
1=\sum_{k=0}^{N-2}B_k(0)+C_N(0).
\]
Taking $N\rightarrow\infty$ gives us
%
\begin{equation}\label{fisinial}
\sum_{k=0}^{\infty}B_k(0)<1.
\end{equation}

We would like to show that
\[
B(\theta):=\sum_{k=0}^{\infty}B_k(\theta)
\]
is continuous in $\theta$ at $0$. Since $\theta\mapsto B_k(\theta)$ is
continuous for each $k$, it suffices to argue that the tail of this sum
is small, uniformly in $\theta$ in a neighborhood of $0$. Indeed,
%
\begin{eqnarray}\label{faikcina}
\sum_{k=N}^{\infty}B_k(\theta)&\leq&e^{\bar{V}}\sum
_{x,y}q^\theta(x)q^\theta(y)\hat{P}_x^\theta\hat{P}_y^\theta(N\leq\tau
<\infty)\nonumber\\ &=&e^{\bar{V}}\hat{P}_o^\theta\hat
{P}_o^\theta(N+1\leq\tau^+<\infty)\nonumber\\
&\leq&e^{\bar
{V}}\sum_{k=N+1}^{\infty}\hat{P}_o^\theta\hat{P}_o^\theta(X_k=Y_k
).
\end{eqnarray}
Since $d\geq3$ and the covariance of $X_1-Y_1$ under $\hat{P}_o^\theta
\hat{P}_o^\theta$ is a nonsingular matrix whose entries are continuous
in $\theta$, the local CLT implies that the sum in (\ref{faikcina}) is
the tail of a series which converges uniformly in $\theta$ in a
neighborhood of $0$.

Now that we know $\theta\mapsto B(\theta)$ is continuous at $0$, we
recall (\ref{fisinial}) and see that there exists $\bar{\eta}>0$ such
that for every $\theta\in\mathbb{R}^d$ with $|\theta|<\bar{\eta}$,
$B(\theta)<1$. Letting $C(\theta):=\sup_{M}C_M(\theta)$, we turn to
(\ref{neis}) and conclude that
\[
\sup_{M\leq N}G_M(\theta)\leq\frac{C(\theta)}{1-B(\theta)}<\infty.
\]
Taking $N\rightarrow\infty$ gives the desired result.
\end{pf}

\subsection{Obtaining the function $u^\theta$}

From now on, we consider $d\geq3$ and $\theta$ as in Lemma \ref{eltu}.
For every $x\in\mathbb{Z}^d$ and $n,N\in\mathbb{Z}$ with $n<N$, we
recall (\ref{babalar}) and observe that $\mathbb{P}$-a.s.
\begin{eqnarray*}
u_N^\theta(\omega,n,x)&=&\frac{E_{n,x}^\omega[e^{\langle\theta
,X_N-X_n\rangle} ]}{\phi(\theta)^{N-n}}\\
&=&\sum_{y}\pi_{n,n+1}(x,y)e^{\langle\theta,y-x\rangle}\frac
{E_{n+1,y}^\omega[e^{\langle\theta,X_N-X_{n+1}\rangle} ]}{\phi
(\theta)^{N-n}}\\
&=&\sum_{y}\pi_{n,n+1}(x,y)e^{\langle\theta,y-x\rangle-\log\phi
(\theta)}u_N^\theta(\omega,n+1,y).
\end{eqnarray*}
$ (u_N^\theta(\cdot,n,x) )_{N>n}$ is a nonnegative martingale and
$\mathbb{P}$-a.s. converges to a limit $u^\theta(\cdot,n,x)$ which satisfies
%
\begin{equation}\label{denk}
u^\theta(\omega,n,x)=\sum_{y}\pi_{n,n+1}(x,y)e^{\langle\theta
,y-x\rangle-\log\phi(\theta)}u^\theta(\omega,n+1,y).
\end{equation}
By Lemma \ref{eltu}, $ (u_N^\theta(\cdot,n,x) )_{N>n}$ is uniformly
bounded in $L^2(\mathbb{P})$ and, therefore, the convergence takes
place also in $L^2(\mathbb{P})$.

\subsection{Some properties of $u^\theta$}

For every $x\in\mathbb{Z}^d$ and $n,N\in\mathbb{Z}$ with $n<N$, we know
by (\ref{budur}) that $ \|u_N^\theta(\cdot,n,x) \|_{L^1(\mathbb
{P})}=1$. Since $ (u_N^\theta(\cdot,n,x) )_{N>n}$ converges to $u^\theta
(\cdot,n,x)$ in $L^2(\mathbb{P})$, we immediately see that $ \|u^\theta
(\cdot,n,x) \|_{L^1(\mathbb{P})}=1$ and $u^\theta(\cdot,n,x)\in
L^2(\mathbb{P})$.

Next, we observe that $\mathbb{P}$-a.s.
\begin{eqnarray*}
u_N^\theta(T_{n,x}\omega,0,0)&=&\frac{E_{o,o}^{T_{n,x}\omega} [
e^{\langle\theta,X_N-X_o\rangle} ]}{\phi(\theta)^{N}}\\
&=&\frac
{E_{n,x}^\omega[e^{\langle\theta,X_{N+n}-X_n\rangle} ]}{\phi
(\theta)^{N}}\\
&=&u_{N+n}^\theta(\omega,n,x).
\end{eqnarray*}
Taking $N\to\infty$, we see that $\mathbb{P}$-a.s.
%
\begin{equation}\label{oldu}
u^\theta(T_{n,x}\omega,0,0)=u^\theta(\omega,n,x).
\end{equation}
We abbreviate the notation by setting
%
\begin{equation}\label{yeniu}
u^\theta(\omega):=u^\theta(\omega,0,0).
\end{equation}
Since $u_N^\theta(\cdot,0,0)$ is $\mathcal{B}_0^+$-measurable, it
follows that $u^\theta$ is $\mathcal{B}_0^+$-measurable as
well. 

Using (\ref{oldu}) and (\ref{yeniu}), we put (\ref{denk}) in the
following form: $\mathbb{P}$-a.s.
%
\begin{equation}\label{bariz}
u^\theta(\omega)=\sum_{z\in U}\overline{\pi}(\omega,T_{1,z}\omega
)e^{\langle\theta,z\rangle-\log\phi(\theta)}u^\theta
(T_{1,z}\omega).
\end{equation}

Finally, let us prove that $u^\theta>0$ holds $\mathbb{P}$-a.s. We
already know that $u^\theta\geq0$ holds $\mathbb{P}$-a.s. Clearly, (\ref
{bariz}) implies that $ \{\omega\dvtx u^\theta(\omega)=0 \}$ is invariant
under $T_{1,z}$ for every $z\in U$. Since the product environment
$\mathbb{P}$ is ergodic under shifts, $\mathbb{P}(u^\theta(\omega)=0)$
is either $0$ or $1$. But we know that $ \|u^\theta(\cdot,n,x) \|
_{L^1(\mathbb{P})}=1$ and, therefore, we conclude that $\mathbb
{P}(u^\theta(\omega)=0)=0$.

Now, we are ready to define a new transition kernel $\overline{\pi
}^\theta$ on $\Omega$ by a Doob $h$-transform: For every $z\in U$,
$\mathbb{P}$-a.s.
%
\begin{equation}\label{doobcan}
\overline{\pi}^\theta(\omega,T_{1,z}\omega):= \overline{\pi}(\omega
,T_{1,z}\omega)\frac{u^\theta(T_{1,z}\omega)}{u^\theta(\omega)}
e^{\langle\theta,z\rangle-\log\phi(\theta)}.
\end{equation}
$\overline{\pi}^\theta$ induces a probability measure $P_{k,x}^{\theta
,\omega}$ on particle paths starting at position $x$ at time $k$ and we
write $E_{k,x}^{\theta,\omega}$ to denote expectation under this measure.

\subsection{\texorpdfstring{Proofs of Theorems \protect\ref{AequalsQ} and
\protect\ref{strong}}{Proofs of Theorems 2 and 3}}

\mbox{}

\begin{pf*}{Proof of Theorem \ref{AequalsQ}}
For $d\geq3$ and $\bar{\eta}$ as in Lemma \ref{eltu}, we recall (\ref
{babalar}) and observe that if $|\theta|<\bar{\eta}$ then
%
\begin{equation}\label{higherz}
\qquad\quad\lim_{n\rightarrow\infty}\frac{1}{n}\log E_{o,o}^\omega\bigl[
e^{\langle\theta,X_n\rangle} \bigr]=\log\phi(\theta)+\lim_{n\rightarrow\infty
}\frac{1}{n}\log u_n^\theta(\omega,0,0)=\log\phi(\theta)
\end{equation}
because $\lim_{n\to\infty} u_n^\theta(\omega) = u^\theta(\omega)>0$
holds $\mathbb{P}$-a.s. Since $\log\phi$ is strictly convex and $\xi
_o=\nabla\log\phi(0)$,
\[
\{\nabla\log\phi(\theta)\dvtx |\theta|<\bar{\eta} \}
\]
is an open set containing the LLN velocity $\xi_o$. Hence, there exists
$\eta>0$ such that for every $\xi\in\mathcal{D}$ with $|\xi-\xi_o|<\eta
$ there is a unique $\theta$ satisfying $|\theta|<\bar{\eta}$ and $\xi
=\nabla\log\phi(\theta)$. Because $\theta\mapsto\log\phi(\theta)$ is
analytic, (\ref{higherz}) and the G\"{a}rtner--Ellis theorem (Dembo and
Zeitouni \cite{DZ}, page 44) immediately imply the desired
result.
\end{pf*}

\begin{pf*}{Proof of Theorem \ref{strong}}
Under the conditions of the theorem, we recall the proof of Theorem \ref
{averagedconditioning} and see that for the unique $\theta$ with $\xi
=\nabla\log\phi(\theta)$
\[
\limsup_{n\rightarrow\infty}\frac{1}{n}\log E_{o,o} \bigl[e^{\langle
\theta,X_n\rangle-n\log\phi(\theta)}, A_{\xi,n}^\epsilon(f) \bigr]=:\gamma<0.
\]
Fixing $\alpha>0$, for every $n\in\mathbb{N,}$ we define the events
\[
B_n':= \bigl\{\omega\dvtx E_{o,o}^\omega\bigl[e^{\langle\theta,X_n\rangle
-n\log\phi(\theta)}, A_{\xi,n}^\epsilon(f) \bigr]>e^{n(\gamma
+\alpha)} \bigr\}
\]
on $\Omega$. Then
\begin{eqnarray*}
\mathbb{P} (B_n' )&\leq&\int_{B_n'}E_{o,o}^\omega\bigl[e^{\langle
\theta,X_n\rangle-n\log\phi(\theta)}, A_{\xi,n}^\epsilon(f) \bigr]
e^{-n(\gamma+\alpha)}\,d\mathbb{P}\\
&\leq& E_{o,o} \bigl[e^{\langle\theta,X_n\rangle-n\log\phi(\theta
)}, A_{\xi,n}^\epsilon(f) \bigr]e^{-n(\gamma+\alpha)}.
\end{eqnarray*}
Therefore, $\limsup_{n\rightarrow\infty}\frac{1}{n}\log\mathbb{P} (B_n'
)\leq-\alpha,$ and in particular $\sum_{n=1}^{\infty}\mathbb{P} (B_n'
)<\infty$. By the Borel--Cantelli lemma, $\mathbb{P} (B_n' \mbox{ i.o.}
)=0$. In other words, $\mathbb{P}$-a.s.
\[
E_{o,o}^\omega\bigl[e^{\langle\theta,X_n\rangle-n\log\phi(\theta
)}, A_{\xi,n}^\epsilon(f) \bigr]\leq e^{n(\gamma+\alpha)}
\]
for sufficiently large $n$. Thus,
\[
\limsup_{n\rightarrow\infty}\frac{1}{n}\log E_{o,o}^\omega\bigl[
e^{\langle\theta,X_n\rangle-n\log\phi(\theta)}, A_{\xi,n}^\epsilon
(f) \bigr]\leq\gamma+\alpha.
\]
Since $\alpha>0$ is arbitrary, we actually see that for $\mathbb{P}$-a.e. $\omega$
\[
\limsup_{n\rightarrow\infty}\frac{1}{n}\log E_{o,o}^\omega\bigl[
e^{\langle\theta,X_n\rangle-n\log\phi(\theta)}, A_{\xi,n}^\epsilon
(f) \bigr]\leq\gamma.
\]

Let us now finish the proof of the theorem:
\begin{eqnarray*}
&&\limsup_{n\rightarrow\infty}\frac{1}{n}\log P_{o,o}^\omega(A_{\xi
,n}^\epsilon(f) |D_{\xi,n}^\delta )\\
&&\qquad\leq\limsup_{n\rightarrow\infty}\frac{1}{n}\log P_{o,o}^\omega(A_{\xi
,n}^\epsilon(f),D_{\xi,n}^\delta) - \liminf_{n\rightarrow\infty}\frac
{1}{n}\log P_{o,o}^\omega(D_{\xi,n}^\delta)\\
&&\qquad\leq\limsup_{n\rightarrow\infty}\frac{1}{n}\log E_{o,o}^\omega\biggl[
e^{\langle\theta,X_n\rangle},A_{\xi,n}^\epsilon(f), \biggl|\frac
{X_n}{n}-\xi\biggr|\leq\delta\biggr]\\
&&\quad\qquad{} - \langle\theta,\xi\rangle+ I_a(\xi) +
|\theta|\delta\\
&&\qquad\leq\limsup_{n\rightarrow\infty}\frac{1}{n}\log E_{o,o}^\omega\bigl[
e^{\langle\theta,X_n\rangle- n\log\phi(\theta)},A_{\xi,n}^\epsilon
(f) \bigr] + |\theta|\delta\\
&&\qquad\leq \gamma+ |\theta|\delta\\
&&\qquad< 0
\end{eqnarray*}
when $\delta>0$ is sufficiently small. In the above estimate, we use
the fact that the quenched LDP holds in a neighborhood of $\xi$ with
rate
\[
I_a(\xi)=\langle\theta,\xi\rangle-\log\phi(\theta)
\]
at $\xi$, which is true by hypothesis.
\end{pf*}

\section{Identifying $\mu_\xi^\infty$ as a stationary Markov
process}\label{Doobsection}

For $d\geq3$ and $|\xi-\xi_o|<\eta$ with $\eta$ as in Theorem \ref
{AequalsQ}, we let $\theta\in\mathbb{R}^d$ be the unique solution of
$\xi=\nabla\log\phi(\theta)$. We can put $\overline{\mu}_\xi^\infty$ in
a nicer form. For every $N,M$ and $K\in\mathbb{N}$ and any $f$ as in
Definition \ref{definemu}, setting $L:=N+M+K+1$, we have
%
\begin{eqnarray}\label{genc}
&&\int f\,d\overline{\mu}_\xi^\infty\nonumber\\
&&\qquad=E_{o,o} \bigl[e^{\langle\theta,X_{L}\rangle- L\log\phi(\theta
)}f(T_{N,X_N}\omega,(Z_{N+i})_{i\geq1}) \bigr]\nonumber\\
&&\qquad=\sum_{x}\mathbb{E} \bigl(E_{o,o}^\omega\bigl[e^{\langle\theta
,X_{L}\rangle- L\log\phi(\theta)}f(T_{N,X_N}\omega,(Z_{N+i})_{i\geq
1}),X_L=x \bigr] \bigr)\nonumber\\
&&\quad\qquad\phantom{\sum_{x}}   {}\times\mathbb{E} (u^\theta(T_{L,x}\omega) )\nonumber\\
&&\qquad=\sum_{x}\mathbb{E} \bigl(E_{o,o}^\omega\bigl[e^{\langle\theta
,X_{L}\rangle- L\log\phi(\theta)}u^\theta(T_{L,x}\omega
)f(T_{N,X_N}\omega,(Z_{N+i})_{i\geq1}),X_L=x \bigr] \bigr)\nonumber\\
&&\qquad=\mathbb{E} \biggl(u^\theta(\omega)E_{o,o}^\omega\biggl[e^{\langle\theta
,X_{L}\rangle- L\log\phi(\theta)}\frac{u^\theta(T_{L,X_L}\omega
)}{u^\theta(\omega)}f(T_{N,X_N}\omega,(Z_{N+i})_{i\geq1}) \biggr] \biggr)\nonumber\\
&&\qquad=\mathbb{E} (u^\theta(\omega)E_{o,o}^{\theta,\omega} [f(T_{N,X_N}\omega
,(Z_{N+i})_{i\geq1}) ] ),
\end{eqnarray}
where we use the facts that $\mathbb{E} (u^\theta(T_{L,x}\cdot) )=1$
and $u^\theta(T_{L,x}\cdot)$ is $\mathcal{B}_L^+$-measurable.

We note that (\ref{genc}) is independent of $M$ and $K$. This
immediately tells us that the marginal $\mu_\xi^1$ of $\mu_\xi^\infty$
is absolutely continuous relative to $\mathbb{P}$ on every $\mathcal
{B}_{-N}^+$. Here is how: We fix $N\in\mathbb{N}$. For any $M\in\mathbb
{N}$ and any bounded $\mathcal{B}_{-N}^+\cap\mathcal
{B}_{M}^-$-measurable $h\dvtx\Omega\rightarrow\mathbb{R}$, we have
\begin{eqnarray*}
\int h\,d\mu_\xi^1&=&\mathbb{E} (u^\theta(\omega)E_{o,o}^{\theta
,\omega} [h(T_{N,X_N}\omega) ] )\\
&\leq&\|u^\theta\|_{L^2(\mathbb{P})} \|E_{o,o}^{\theta,\omega}
[h(T_{N,X_N}\omega) ] \|_{L^2(\mathbb{P})}\\
&\leq&\|u^\theta\|_{L^2(\mathbb{P})} \Biggl\|\sum_{|x|\leq N}|h(T_{N,x}\omega
) |\Biggr\|_{L^2(\mathbb{P})}\\
&\leq&(2N+1)^d \|u^\theta\|_{L^2(\mathbb{P})} \|h \|_{L^2(\mathbb{P})}.
\end{eqnarray*}
Since such functions are dense in $L^2(\Omega,\mathcal{B}_{-N}^+,\mathbb
{P})$, it follows by the Riesz representation theorem that
%
\begin{equation}\label{riesz}
\frac{d\mu_{\xi}^1}{d\mathbb{P}} \biggm|_{\mathcal
{B}_{-N}^+}\in L^2(\mathbb{P}).
\end{equation}

\begin{pf*}{Proof of Theorem \ref{doob}}
We have obtained $u^\theta$ in (\ref{yeniu}) and defined
$\overline{\pi}^\theta$ in (\ref{doobcan}). For every $N,K\in\mathbb
{N}$, we take any bounded $f:\Omega^{K+1}\rightarrow\mathbb{R}$ and
$g:\Omega^{\mathbb{N}}\rightarrow\mathbb{R}$ such that
\[
f (\omega,T_{1,z_1}\omega,\ldots,T_{K,z_1+\cdots+z_K}\omega)g
(T_{K,z_1+\cdots+z_K}\omega,T_{K+1,z_1+\cdots+z_{K+1}}\omega,\ldots)
\]
is $\mathcal{B}_{-N}^+$-measurable for any $(z_i)_{i\geq1}$. Then
\begin{eqnarray*}
&&\int f(\omega_1,\ldots,\omega_{K+1})g(\omega_{K+1},\omega_{K+2},\ldots
)\,d\mu_\xi^\infty\\
&&\qquad=\int f (\omega,T_{1,z_1}\omega,\ldots,T_{K,z_1+\cdots+z_K}\omega)\\
&&\quad\qquad\phantom{\int}
{}\times g(T_{K,z_1+\cdots+z_K}\omega,T_{K+1,z_1+\cdots+z_{K+1}}\omega,\ldots
)\,d\overline{\mu}_\xi^\infty\\
&&\qquad=\mathbb{E} (u^\theta(\omega)E_{o,o}^{\theta,\omega} [f(T_{N,X_N}\omega
,\ldots,T_{N+K,X_{N+K}}\omega)g(T_{N+K,X_{N+K}}\omega,\ldots) ] )\\
&&\qquad=\mathbb{E} (u^\theta(\omega)E_{o,o}^{\theta,\omega} [f(T_{N,X_N}\omega
,\ldots,T_{N+K,X_{N+K}}\omega)\\
&&\phantom{\qquad=\mathbb{E} (u^\theta(\omega)E_{o,o}^{\theta,\omega} [}
{}\times E_{N+K,X_{N+K}}^{\theta,\omega}
[g(T_{N+K,X_{N+K}}\omega,\ldots) ] ] )\\
&&\qquad=\int f(\omega_1,\ldots,\omega_{K+1})E_{o,o}^{\theta,\omega_{K+1}}
[g(\omega_{K+1},T_{1,X_1}\omega_{K+1},\ldots) ]\,d\mu_\xi^\infty
\end{eqnarray*}
where we use (\ref{genc}) and the Markov property. This proves that $\mu
_\xi^\infty$ is indeed a Markov process with state space $\Omega$ and
transition kernel $\overline{\pi}^\theta$.

We already know that $\mu_\xi^\infty$ is a stationary process. Hence,
its marginal $\mu_\xi^1$ is an invariant measure for $\overline{\pi
}^\theta$. Since $\mu_\xi^1$ is absolutely continuous relative to
$\mathbb{P}$ on every $\mathcal{B}_{-N}^+$ [by (\ref{riesz})], it
follows that $\mu_\xi^1$ is the unique invariant measure for $\overline
{\pi}^\theta$ with that absolute continuity property (see Rassoul-Agha
\cite{Firas}).
\end{pf*}

\section*{Acknowledgments}
This work is part of my Ph.D. thesis. I am grateful to my advisor
S. R. S. Varadhan for generously sharing his ideas and patiently
guiding me throughout my studies. I also thank F. Rassoul-Agha and
T. Sepp\"al\"ainen for valuable remarks and suggestions.

%

\printaddresses

\end{document}